\documentclass[12pt]{article}

\usepackage[english]{babel}
\usepackage{amsmath}
\usepackage{amsfonts}
\usepackage{amsthm,amssymb,latexsym}
\usepackage{graphics,graphicx}
\usepackage{epsfig}
\usepackage{epsf}
\usepackage{eepic,epic}

% For affilitations
\usepackage{authblk}

\long\def\symbolfootnote[#1]#2{\begingroup
\def\thefootnote{\fnsymbol{footnote}}\footnote[#1]{#2}\endgroup}

\newtheorem{prop}{Proposition}
\newtheorem{obs}{Observation}
\newtheorem{lemma}[prop]{Lemma}

\newtheorem{theorem}[prop]{Theorem}

\theoremstyle{definition}

  \title{On the representation number of a crown graph}

\author[1]{Marc Glen}
\author[1]{Sergey Kitaev}
\author[2]{Artem Pyatkin}

\affil[1]{\small Department of Computer and Information Sciences, University of Strathclyde, 
26 Richmond Street, Glasgow G1 1XH, UK}
\affil[2]{Sobolev Institute of Mathematics, Novosibirsk State University, 630090 Novosibirsk, Russia}

\date{}

\begin{document}

\maketitle

\abstract{A graph $G=(V,E)$ is word-representable if there exists a word $w$ over the alphabet $V$ such that letters $x$ and $y$ alternate in $w$ if and only if $xy$ is an edge in $E$. It is known that any word-representable graph $G$  is $k$-word-representable for some $k$, that is, there exists  a word $w$ representing $G$ such that each letter occurs exactly $k$ times in $w$. The minimum such $k$ is called $G$'s representation number. 

A  crown graph $H_{n,n}$ is a graph obtained from the complete bipartite graph $K_{n,n}$ by removing a perfect matching. In this paper we show that for $n\geq 5$, $H_{n,n}$'s representation number is $\lceil n/2 \rceil$. This result not only provides a complete solution to the open Problem 7.4.2 in \cite{KL}, but also gives a negative answer to the question raised in Problem 7.2.7 in \cite{KL} on 3-word-representability of bipartite graphs. As a byproduct we obtain a new example of a graph class with a high representation number. \\

\noindent {\bf Keywords:} word-representable graph, crown graph, representation number
}

\section{Introduction}

A graph $G=(V,E)$ is word-representable if there exists a word $w$ over the alphabet $V$ such that letters $x$ and $y$ alternate in $w$ if and only if $xy$ is an edge in $E$. For example, the cycle graph on 4 vertices labeled by 1, 2, 3 and 4 in clockwise direction can be represented by the word 14213243. 

There is a long line of research on word-representable graphs, which is summarised in the recently published book \cite{KL}. The roots of the theory of word-representable graphs are in the study of the celebrated Perkins semigroup \cite{KS} which has played a central role in semigroup theory since 1960, particularly as a source of examples and counterexamples. 

It was shown in \cite{KP} that if a graph $G$ is word-representable then it is {\em $k$-word-representable} for some $k$, that is, $G$ can be represented by a $k$-{\em uniform} word $w$, i.~e. a word containing $k$ copies of each letter. In such a context we say that $w$ {\em $k$-represents} $G$. For example, the cycle graph on 4 vertices mentioned above can be 2-represented by the word 14213243. Thus, when discussing word-representability, one need only consider  $k$-uniform words. The nice property of such words is that any cyclic shift of a $k$-uniform word represents the same graph  \cite{KP}. The minimum $k$ for which a word-representable graph $G$ is $k$-word-representable is called $G$'s {\em representation number}.

The following observation trivially follows from the definitions.

\begin{obs}\label{propY} The class of complete graphs coincides with the class of $1$-word-representable graphs. In particular, the complete graph's representation number is $1$.\end{obs}

\subsection{Representation of crown graphs}
A {\it crown graph} (also known as a {\it cocktail party graph}) $H_{n,n}$ is a graph obtained from the complete bipartite graph $K_{n,n}$ by removing a perfect matching. Formally, 
$V(H_{n,n})=\{1,\ldots, n,1',\ldots, n' \}$ and $E(H_{n,n})=\{ ij' \ | i\ne j \}$. First four examples of such graphs are presented in Figure~\ref{crown-pic}.

\begin{figure}[ht]
\begin{center}
\includegraphics[scale=0.6]{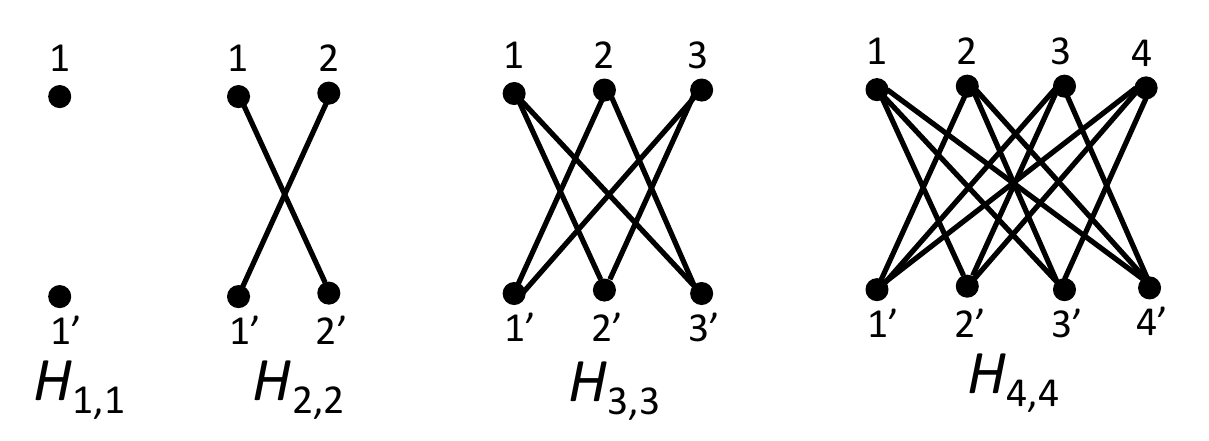}
\end{center}
\vspace{-20pt}
\caption{The crown graph $H_{n,n}$ for $n=1,2,3,4$}
\label{crown-pic}
\end{figure}

Crown graphs are of special importance in the theory of word-representable graphs. More precisely, they appear in the construction of graphs requiring long words representing them \cite{HKP}.  Note that these graphs also appear in the theory of partially ordered sets as those defining partial orders that require many linear orders to be represented. 

Each crown graph, being a bipartite graph, is a {\em comparability graph} (that is, a transitively orientable graph), and thus it can be represented by a concatenation of permutations \cite{KS}. Moreover, it follows from \cite{HKP}, and also is discussed in Section 7.4 in \cite{KL}, that $H_{n,n}$ can be represented as a concatenation of $n$ permutations but it cannot be represented as a concatenation of a fewer permutations. Thus, the representation number of $H_{n,n}$ is at most $n$.  See Table~\ref{ex-k-repr-Hkk} (appearing in \cite{KL}) for the words representing the graphs in Figure~\ref{crown-pic} as concatenation of permutations.

\begin{table}
\begin{center}
\begin{tabular}{c|c}
$n$ & representation of $H_{n,n}$ by concatenation of $n$ permutations\\
\hline
1 & $11'1'1$\\
\hline
2 & $12'21'21'12'$\\
\hline
3 & $123'32'1'132'23'1'231'13'2'$\\ 
\hline
4 & $1234'43'2'1'1243'34'2'1'1342'24'3'1'2341'14'3'2'$\\ 
\hline
\end{tabular}
\caption{Representing $H_{n,n}$ as a concatenation of $n$ permutations}\label{ex-k-repr-Hkk}
\end{center}
\end{table}

It was noticed in \cite{K} that, for example, $H_{3,3}$ can be represented using two copies of each letter as $3'32'1'132'23'1'231'1$ (as opposed to three copies used in Table~\ref{ex-k-repr-Hkk} to represent it) if we drop the requirement to represent crown graphs as concatenation of permutations.  On the other hand, $H_{4,4}$ is the three-dimensional cube, which is the {\em prism graph} Pr$_4$, so that $H_{4,4}$ is 3-word-representable by Proposition~15 in \cite{KP}, while four copies of each letter are used in Table~\ref{ex-k-repr-Hkk} to represent this graph. Note that $H_{4,4}$ is not 2-word-representable by Theorem~18 in \cite{K}. 

These observations led to Problem 7.4.2 on page 172 in \cite{KL} essentially asking to find the representation number of a crown graph $H_{n,n}$. A relevant Problem 7.2.7 on page 169 in \cite{KL} asks whether each bipartite graph is 3-word-representable. When we started to investigate these problems, we established that both $H_{5,5}$ and $H_{6,6}$ are 3-word-representable, which not only suggested that the representation number of a crown graph could be the constant 3, but also that any bipartite graph could be 3-word-representable since crown graphs seem to be the most difficult amoung them to be represented. 

In this paper we completely solve the former problem (Problem 7.4.2) and provide the negative answer to the question in the latter problem (Problem 7.2.7) by showing that if $n\ge 5$ then the crown graph  $H_{n,n}$, being a bipartite graph, is $\lceil n/2 \rceil$-representable (see Theorem~\ref{thm1}). Thus, crown graphs are another example of a graph class with high representation number. Note that non-bipartite graphs obtained from crown graphs by adding an all-adjacent vertex require roughly twice as long words representing them (see Section 4.2.1 in \cite{KL}).

\subsection{Organization of the paper and some definitions}

The paper is organized as follows. In Section~\ref{sec-lower}, we find a lower bound for the representation number of $H_{n,n}$, while in Section~\ref{sec-upper} we provide a construction of words representing $H_{n,n}$ that match our lower bound. Finally, in Section~\ref{conclusion} we provide some concluding remarks including directions for further research.

We conclude the introduction with a number of technical definitions to be used in the paper. 

A {\em factor} of a word is a number of consecutive letters in the word.  For example, the set of all factors of the word 1132 of length at most 2 is $\{1,2,3,11,13,32\}$. A {\em subword} of a word is a subsequence of letters in the word. For instance, 56, 5212 and 361 are examples of subwords in 3526162. The subword of a word $w$ {\em induced} by a set $A$ is obtained by removing all elements in $w$ not belonging to $A$. For example, if $A=\{2,4,5\}$ then the subword of 223141565 induced by $A$ is  22455.

For a vertex $v$ in a graph $G$ denote by $N(v)$ the neighbourhood of  $v$, i.~e. the set of vertices adjacent to $v$. Clearly, if a graph is bipartite then the neighbourhood of each vertex induces an independent set.

\section{A lower bound for the representation number of $H_{n,n}$}\label{sec-lower}

For a word $w$, let $l(w)$ and $r(w)$ be its first and last letters, respectively.

Let $w$ be a word that $k$-represents a graph $G=(V,E)$.  A subset $A\subseteq V$ is {\it splittable} if there is a cyclic shift of the word $w$ such that the subword induced by the set $A$ has the form $P_1\cdots P_k$ where each $P_i$ is a permutation of $A$. For a splittable set $A$, a {\it canonical shift} of $w$, with respect to $A$, is a cyclic shift of $w$ that puts 
%the first letter of 
$l(P_1)$ at the beginning of the word. Note that up to renaming permutations there is a unique canonical shift.
The following proposition gives an example of a splittable set.

\begin{prop}\label{propX} For any vertex $v\in V$ in a word-representable graph $G=(V,E)$, the set $A=N(v)$ is splittable. \end{prop}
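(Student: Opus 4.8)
The plan is to exploit the cyclic-shift invariance of $k$-uniform representations (recalled in the introduction) together with the fact that $v$ alternates with every vertex of its neighbourhood. First I would cyclically shift $w$ so that it begins with an occurrence of $v$; this shifted word still $k$-represents $G$. Using the $k$ copies of $v$ as delimiters, I can then write the shifted word as $w = v B_1 v B_2 \cdots v B_k$, where $B_1, \ldots, B_k$ are the (possibly empty) factors lying strictly between consecutive occurrences of $v$, with $B_k$ being the suffix following the last $v$.

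The heart of the argument is to show that each block $B_i$, once restricted to $A = N(v)$, is a full permutation of $A$. Fix any $a \in A$. Since $va \in E$, the letters $v$ and $a$ alternate in $w$, so the subword induced by $\{v,a\}$ must read $vavav\cdots va$: it starts with $v$ because $w$ does, and it has exactly $2k$ letters since the word is $k$-uniform and $a \neq v$ (a vertex is not its own neighbour). Consequently exactly one copy of $a$ falls into each block $B_i$. As this holds simultaneously for every $a \in A$, each $B_i$ contains exactly one occurrence of each element of $A$, so the subword $P_i := B_i|_A$ induced in $B_i$ by $A$ is a permutation of $A$.

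Finally I would assemble the pieces. Because $v \notin A$, deleting $v$ and then all letters outside $A$ from $w = v B_1 \cdots v B_k$ yields precisely $P_1 P_2 \cdots P_k$, which exhibits the decomposition required by the definition and proves that $A$ is splittable. The only genuinely delicate point is the counting step verifying that alternation together with $k$-uniformity forces exactly one copy of each neighbour per block; everything else is bookkeeping. I do not expect any obstacle beyond making that counting precise, as it follows directly from applying the definition of alternation to each pair $\{v,a\}$.
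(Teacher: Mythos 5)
Your proposal is correct and matches the paper's proof essentially verbatim: the paper likewise shifts $w$ cyclically so that it begins with $v$ and observes that alternation of $v$ with each neighbour forces exactly one copy of every letter of $N(v)$ between consecutive occurrences of $v$ (and after the last one), yielding the required concatenation of permutations. Your careful counting via the induced subword $vava\cdots va$ is just a more explicit rendering of the same observation.
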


\begin{proof} Consider a cyclic shift of a word $w$ $k$-representing $G$ that puts $v$ at the beginning of the word. Then between any two occurrences of $v$ (and after the last one) each letter from $A$ occurs exactly once, i.e.\ the subword induced by $A$ is a concatenation of permutations. Note, however, that this shift is not canonical with respect to $A$. \end{proof}

For a letter $x$, denote by $x_i$ its $i$-th occurrence in a word $w$ (from left to right). We write $x_i<y_j$ if the $i$-th occurrence of $x$ is to the left of the $j$-th occurrence of $y$ in $w$. Clearly, if $A$ is splittable, then for every $a,b\in A$ and for all $i,j$ such that $1\le i<j\le k$, we have $a_i<b_j$.
%(whenever $a_i$ and $b_j$ are defined).

\begin{lemma}\label{lem1}
Let a word $w$ $k$-represent a graph $G=(V,E)$ and $A\subset V$ be a splittable set. Further, let $a,b\in A$, $x\not\in A$ and $ax,bx\in E$. If in a canonical shift of $w$ $a_1<x_1<b_1$ then 
%for every $i=1,\ldots, k$ $a_i<x_i<b_i$ and 
$ab\in E$. \end{lemma}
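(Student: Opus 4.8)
The plan is to prove that $a$ and $b$ alternate in $w$, which by the definition of word-representability is equivalent to $ab\in E$. Since $A$ is splittable, the $i$-th occurrences $a_i$ and $b_i$ both lie in the $i$-th permutation block $P_i$, so the only freedom in the relative order of the $a$'s and $b$'s is the choice, inside each block, between the local orders $a_ib_i$ and $b_ia_i$; the across-block inequalities $a_i<b_j$ and $b_i<a_j$ for $i<j$ are already forced. Consequently $a$ and $b$ alternate precisely when every block carries the same orientation, and I will show that the two edges $ax,bx$ force exactly this.

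First I would exploit $ax\in E$. By definition $a$ and $x$ alternate, so the subword induced by $\{a,x\}$ is either $(ax)^k$ or $(xa)^k$, both letters occurring exactly $k$ times as $w$ is $k$-uniform. The hypothesis $a_1<x_1$ selects the former, giving $a_i<x_i<a_{i+1}$ for every admissible $i$. Symmetrically, $bx\in E$ makes $b$ and $x$ alternate, and the hypothesis $x_1<b_1$ forces the induced subword on $\{b,x\}$ to be $(xb)^k$, giving $x_i<b_i<x_{i+1}$ for every admissible $i$.

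Combining the two chains yields $a_i<x_i<b_i$ for every $i$, so $a_i<b_i$ holds in each block; together with the splittability inequality $b_i<a_{i+1}$ this produces the single chain $a_1<b_1<a_2<b_2<\cdots<a_k<b_k$. Hence the subword induced by $\{a,b\}$ equals $(ab)^k$, the letters $a$ and $b$ alternate, and therefore $ab\in E$.

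I expect the main obstacle to be the second step: upgrading each alternation property, which a priori only asserts the absence of immediate repetitions, into the exact positional chain $a_i<x_i<a_{i+1}$ and its analogue for $b$. This hinges on combining $k$-uniformity with the single seed inequality supplied by the canonical shift, and then on invoking the block structure of the splittable set $A$ to supply the remaining across-block comparison $b_i<a_{i+1}$; once all of these are in place, the conclusion is immediate.
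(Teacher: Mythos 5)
Your proof is correct and follows essentially the same route as the paper's: both derive $a_i<x_i<b_i$ for all $i$ from the two alternations seeded by $a_1<x_1<b_1$, combine this with the splittability inequality $b_i<a_{i+1}$ to obtain the chain $a_1<b_1<a_2<b_2<\cdots$, and conclude that $a$ and $b$ alternate, hence $ab\in E$. Your version merely spells out in more detail the step the paper states in one line, namely upgrading alternation plus $k$-uniformity into the exact positional inequalities.
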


\begin{proof} Let $a_1<x_1<b_1$. Since $A$ is splittable, $b_i<a_{i+1}$ for each $i$. Since both $a$ and $b$ are adjacent to $x$, we have $a_i<x_i<b_i$ for every $i=1,\ldots, k$. Therefore, $a$ and $b$ alternate in $w$ and must be adjacent in $G$. \end{proof}

\begin{lemma}\label{lem2} If  $n\ge 5$ then in any word $w$ $k$-representing $H_{n,n}$ the set $A=\{ 1,\ldots, n\}$ is splittable. \end{lemma}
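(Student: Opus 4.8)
The plan is to prove that the set $A=\{1,\ldots,n\}$ (one side of the bipartition) must appear as a concatenation of permutations in some cyclic shift of $w$. Since $A$ is an independent set (the crown graph is bipartite), no two letters of $A$ need to alternate, so a priori the subword induced by $A$ could be highly irregular. The key structural fact I would exploit is Lemma~\ref{lem1}: if $a,b\in A$ share a common neighbour $x\notin A$ and in some canonical-type shift we have $a_1<x_1<b_1$, then $a$ and $b$ would be forced to alternate, hence be adjacent --- contradicting that $A$ is independent. So the whole argument will be a proof by contradiction driven by producing such a forbidden alternation.

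First I would fix a cyclic shift of $w$, say one beginning with the first occurrence of some fixed letter, and examine the subword $u$ induced by $A$. Being a $k$-uniform word over $A$, $u$ has length $kn$, and $A$ is splittable precisely when $u$ decomposes into $k$ blocks each of which is a permutation of $A$. I would suppose for contradiction that no cyclic shift yields such a block decomposition. The combinatorial heart is then to show that a ``non-splittable'' induced subword must contain, within a single stretch bounded by consecutive occurrences, two letters $a,b\in A$ whose occurrence-counts get out of sync: concretely, at some point one letter has occurred strictly more times than another. I would track, for each pair $a,b\in A$, the relative order of their occurrences, and argue that failure of splittability means there exist $a,b$ and an index where $a_i$ and $b_i$ straddle in a way that violates the uniform $P_1\cdots P_k$ pattern.

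The bridge from this order-violation to a contradiction is where the hypothesis $n\ge 5$ and the density of the crown graph enter. For two letters $a,b\in A$ that are out of alternation, I want to find a common neighbour $x$ on the opposite side with $a_1<x_1<b_1$ (or the symmetric configuration), so that Lemma~\ref{lem1} forces $ab\in E$, the desired contradiction. In $H_{n,n}$ the common neighbours of $a=i$ and $b=j$ are exactly the primed vertices $x'$ with $x\ne i$ and $x\ne j$, of which there are $n-2$. The role of $n\ge 5$ is to guarantee enough such common neighbours (at least three) that one of them can be positioned between $a_1$ and $b_1$ regardless of how the adversary arranges the word; for small $n$ there are too few primed vertices to force the crossing, which is exactly why the statement is restricted to $n\ge 5$.

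I expect the main obstacle to be the bookkeeping in the middle step: rigorously converting ``the induced subword is not a concatenation of permutations'' into a concrete pair $a,b\in A$ together with a common neighbour $x$ realizing the order $a_1<x_1<b_1$ in a \emph{canonical} shift. One must be careful that the cyclic shift used to witness the alternation of $a,b$ is compatible with applying Lemma~\ref{lem1}, since that lemma presupposes a canonical shift with respect to a splittable set, whereas here we are arguing about $A$ which we are trying to prove is splittable. I would likely circumvent this by applying Lemma~\ref{lem1} not to $A$ itself but to a splittable set furnished by Proposition~\ref{propX} --- for instance $N(x')$ for a suitable opposite-side vertex $x'$, whose neighbourhood is an independent subset of $A$ --- thereby getting a legitimate canonical shift in which the forbidden alternation yields an edge inside an independent set. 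Making this choice of auxiliary vertex precise, and checking that $n\ge 5$ supplies it, will be the crux.
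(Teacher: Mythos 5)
Your high-level framework does match the paper's: both route the argument through Proposition~\ref{propX}, taking the splittable set $B=N(1')=\{2,\ldots,n\}\subset A$, and both use Lemma~\ref{lem1} to turn forbidden configurations into edges inside the independent set $A$. But the combinatorial heart is missing, and the reduction you propose would not work as stated. Since $B$ is splittable, every pair $a,b\in B$ is automatically synchronized (for $i<j$ one has $a_i<b_j$), so pairs of letters of $A$ cannot ``get out of sync'' --- with one exception: the single letter $1$ that $B$ omits. Splittability of $A$ is equivalent to being able to slot exactly one occurrence of $1$ into each block of the decomposition $P_1I_1\cdots P_kI_k$ induced by the canonical shift for $B$. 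So the statement to be proved is a counting statement about one letter relative to $k$ blocks, not a pairwise alternation violation between two letters of $A$, which is where your plan aims. The paper's key step (its Claim~1) is exactly this count: any $t$ consecutive blocks $P_iI_i\cdots P_{i+t-1}$ contain at most $t$ copies of $1$. Moreover, for $t\ge 2$ this is \emph{not} a single application of Lemma~\ref{lem1}: the lemma only shows that no $x'$ lies between the first occurrences of $a=l(P_1)$ and $b=r(P_1)$; one then needs a second, different non-alternation argument comparing $x'$ with $c=r(P_t)$ --- the subword induced by $\{c,x'\}$ starts and ends with $c$ yet contains at least $t$ copies of $x'$ against exactly $t$ copies of $c$, contradicting $cx'\in E$. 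Nothing in your sketch anticipates this two-letter counting step, and it is where the actual work lies; you flag the conversion of non-splittability into a forbidden configuration as ``the crux'' but leave it unresolved.

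Two smaller inaccuracies. First, your account of the role of $n\ge 5$ (``at least three common neighbours, so one can be positioned between $a_1$ and $b_1$'') is off: the word is given and nothing can be positioned; $n\ge 5$ is needed so that $A\setminus\{1,a,b,c\}$ is nonempty, i.e., so that a witness $x'$ with $x\notin\{1,a,b,c\}$ exists in the $t\ge 2$ case of the count (for $t=1$, $n\ge 4$ already suffices). Second, even granting Claim~1, the conclusion is not a one-line contradiction: Claim~1 only yields that each $P_iI_i$ holds at most two $1$s, and when some $P_iI_i$ holds two, one must argue (via Claim~1 applied to $P_1$ and to $P_1I_1P_2$) that one of them lies in $I_1$ and that $P_2$ holds none, then propagate the assignment of $1$s to the permutations $P'_1,P'_2,P'_3,\ldots$ block by block. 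The paper's proof is thus ultimately \emph{constructive} --- it exhibits the concatenation of permutations over $A$ --- rather than a global proof by contradiction from non-splittability of $A$ as you envisage.
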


\begin{proof} By Proposition~\ref{propX}, the set $B:=N(1')=\{2,\ldots, n\}$ is splittable, i.e. there is a cyclic shift of $w$ in which the letters of $B$ form the subword $P'_1\cdots P'_k$, where $P'_i$ is a permutation of $B$. Let a canonical shift of $w$ with respect to $B$ be $P_1I_1\cdots P_kI_k$, where for $i=1,\ldots, k$, the factor  $P_i$ begins at $l(P'_i)$ and ends at $r(P'_i)$, and $I_i$s are (possibly empty) factors lying between $r(P'_i)$ and $l(P'_{i+1})$. We begin by proving the following fact.  \\

\noindent
{\bf Claim 1.} For every $t\geq 1$ and $i\geq 1$ such that $i+t-1\leq k$, the factor $U=P_iI_i\cdots I_{i+t-2}P_{i+t-1}$ of $w$ contains at most $t$ copies of the letter~1. \\

\noindent
{\bf Proof of Claim 1.} Indeed, suppose not. Using a cyclic shift of $w$ if necessary, without loss of generality  we can assume that in a problematic case $i=1$. First consider the case $t=1$. That is, we assume that $P_1$ contains at least two 1s. 
Let $a=l(P_1), b= r(P_1)$ and $x\in V\setminus \{ 1,a,b,1',2',\ldots,n'\}$. Recall that $a,b$ belong to the splittable set $B$. Then the letter $x'$ occurs exactly once between any two consecutive occurrences of 1, in particular, between the first two occurrences. Hence we have  $a_1<x'_1<b_1$. Since both $a$ and $b$ are adjacent to $x'$, it follows from Lemma~\ref{lem1} that $ab\in E$, contradiction. 

Now let $t\ge 2$. Let $a=l(P_1), b= r(P_1), c=r(P_t)$ and $x\in V\setminus  \{ 1,a,b,c,1',2',\ldots,n'\}$ (recall that $n\ge 5$), and suppose that there are at least $t+1$ occurrences of 1 between $a$ and $c$. Note that $a\neq b$, but it is possible that $a=c$ or $a=b$. Since $1x'\in E$, there must be at least $t$ occurrences of $x'$ between $a$ and $c$. By Lemma~\ref{lem1}, no $x'$ can appear between $a_1$ and $b_1$. However, $c$ appears exactly once between $a_1$ and $b_1$ (possibly coinciding with one of them) because $P_1$ contains the permutation $P'_1$ over $B$ as a subword. Moreover, there are exactly $t$ occurrences of $c$  in $U$.
%between $l(P_1)$ and $r(P_t)$. 
Therefore, the subword of $U$ induced by $c$ and $x'$ 
% lying between $l(P_1)$ and $r(P_t)$ 
starts and ends with $c$ and contains at least $t$ copies of $x'$. Clearly, such subword cannot be alternating, which  contradicts $cx'\in E$. Claim~1 is proved. \qed \\

It follows from Claim 1 that each $P_iI_i$ contains at most two 1s, since $P_iI_iP_{i+1}$ contains at most two 1s for $1\leq i\leq k-1$. If each of  $P_iI_i$ contains exactly one 1 then add 1 to each $P'_i$ 
to obtain the concatenation of permutations for the set $A$ showing that it is splittable.
% Assume that $P_1 I_1$ has two 1s 
Otherwise, some  $P_iI_i$ must contain at least two 1s. Without loss of generality, $i=1$
(otherwise, we can apply a cyclic shift and rename the permutations). By Claim 1 applied to $P_1$ and $P_1I_1P_2$, at least one of 1s must be in $I_1$ and $P_2$ contains no 1s. So, add the first occurrence of 1 to $P'_1$ and the second one to $P'_2$. If $I_2$ contains no 1s we apply the same arguments to the word obtained from $w$ by removing the factor $P_1I_1P_2I_2$. Otherwise, again by Claim 1 applied to $P_1I_1P_2I_2P_3$, $I_2$ has one 1, $P_3$ has no 1 and we add this 1 to $P'_3$ and continue in the same way showing that $w$ contains as a subword a  concatenation of permutations over $A$, and thus $A$ is splittabe. \end{proof}

\begin{lemma}\label{lem3} Let $n\ge 5$ and  $w$ $k$-represents $H_{n,n}$. Also, let $P'_1\cdots P'_k$ be a subword of (a cyclic shift of) $w$ that is a concatenation of permutations over $A=\{ 1,\ldots, n\}$ (existing by Lemma~\ref{lem2}). Then for every $a\in A$ there is $j\in \{1,\ldots, k\}$ such that $a=l(P'_j)$ or $a=r(P'_j)$.\end{lemma}

\begin{proof}
Assume that the letter 1 is never the first or the last letter of any permutation $P'_j$. Consider a canonical shift of $w$ for the set $A$ and define the subwords $P_i$ and $I_i$ for permutations $P'_i$ in the same way as in the proof of Lemma~\ref{lem2}.  Since $l(P_1)\ne 1$ and $r(P_1)\ne 1$, no $1'$ can appear between 
$l(P_1)$ and $r(P_1)$ by Lemma~\ref{lem1}. This is true for any $P_i$ since we can apply a cyclic shift and rename $P_i$ and $P_1$. Moreover, no $I_i$ can have two or more $1'$s, or no $1'$s at all, because otherwise $1'$ would  not be adjacent to the vertices in $\{2,\ldots,n\}$.

But since each $P_j$ for $j=1,\ldots, k$ contains one $1$, the letters 1 and $1'$ alternate in $w$, i.e. the vertices 1 and $1'$ must be adjacent, contradicting the definition of $H_{n,n}$. \end{proof}

\begin{theorem}\label{thm1} For $n\geq 1$, the representation number of $H_{n,n}$ is at least $\lceil n/2 \rceil$.
% If $n\ge 5$ then 
\end{theorem}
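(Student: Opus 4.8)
The plan is to show that if $w$ is a word that $k$-represents $H_{n,n}$, then $k\geq \lceil n/2\rceil$, i.e.\ $2k\geq n$. The strategy is to exploit Lemma~\ref{lem3}: the set $A=\{1,\ldots,n\}$ is splittable (Lemma~\ref{lem2}), so we may fix a subword $P'_1\cdots P'_k$ of a cyclic shift of $w$ that is a concatenation of $k$ permutations of $A$, and Lemma~\ref{lem3} guarantees that \emph{every} letter $a\in A$ occupies the first or last position of at least one permutation $P'_j$. I would first dispose of the trivial range $1\le n\le 4$, where $\lceil n/2\rceil\le 2$ and the bound is immediate (for $n\le 2$ the graph is complete or empty-edged, and in general any word-representable graph needs $k\ge 1$, while $k\ge 2$ whenever the graph is not complete). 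So assume $n\ge 5$, where Lemmas~\ref{lem2} and~\ref{lem3} apply.

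The heart of the argument is a counting bound. Each permutation $P'_j$ has exactly one first letter $l(P'_j)$ and exactly one last letter $r(P'_j)$, so across the $k$ permutations there are at most $2k$ ``endpoint slots.'' By Lemma~\ref{lem3}, each of the $n$ letters of $A$ must fill at least one such slot, so the $n$ letters are covered by at most $2k$ endpoint positions. Hence $n\le 2k$, which rearranges to $k\ge n/2$, and since $k$ is an integer, $k\ge \lceil n/2\rceil$. This is the conclusion.

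The step I expect to require the most care is making the counting rigorous, in particular confirming that Lemma~\ref{lem3} really does apply uniformly to every letter of $A$ rather than just to the letter~$1$. As stated, Lemma~\ref{lem3} is phrased for a single vertex, but by the symmetry of the crown graph---more precisely, by relabelling and applying the lemma's argument to each $a\in A$ in turn---the same conclusion holds for every letter: each $a\in A$ satisfies $a=l(P'_j)$ or $a=r(P'_j)$ for some $j$. Once this is established, the map sending each $a\in A$ to a chosen endpoint slot it occupies is well-defined, and since distinct letters cannot occupy the same slot (a given position $l(P'_j)$ or $r(P'_j)$ holds exactly one letter), this map is injective from a set of size $n$ into a set of size $2k$, yielding $n\le 2k$.

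To present this cleanly I would state explicitly that the domain of the counting is the set of $2k$ positions $\{l(P'_1),r(P'_1),\ldots,l(P'_k),r(P'_k)\}$, observe that Lemma~\ref{lem3} (applied symmetrically to all vertices) guarantees every element of $A$ appears among these positions, and conclude by the pigeonhole principle that $|A|=n\le 2k$. The only subtlety beyond the counting is the boundary case $n<5$, which is handled separately as above; for those small values one can verify the bound directly from the examples in Table~\ref{ex-k-repr-Hkk} or from the observation that $H_{n,n}$ is not complete for $n\ge 2$.
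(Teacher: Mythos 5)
Your proposal is correct and follows essentially the same route as the paper: small cases $n\le 4$ via non-completeness (Observation~\ref{propY}), and for $n\ge 5$ the combination of Lemma~\ref{lem2} and Lemma~\ref{lem3} with the endpoint-counting argument $n\le 2k$. One small note: Lemma~\ref{lem3} is already stated for every $a\in A$ (only its proof argues for the letter $1$ without loss of generality), so the symmetry step you flag as needing care is already built into the lemma as the paper states it.
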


\begin{proof} We consider three cases.
\begin{itemize}
\item The statement is trivial for $n=1,2$ since each graph requires at least one copy of each letter to be represented. 
\item None of $H_{n,n}$'s is a complete graph, and thus, by Observation~\ref{propY}, the statement is true for $n=3,4$. 
%In fact, the representation number for both $H_{2,2}$ and $H_{3,3}$ is 2, which follows from Table~\ref{ex-k-repr-Hkk} and the discussion right below it.
%\item The statement is true for $n=4$ as well, because $H_{3,3}$ is not 2-word-representable by Theorem~18 in \cite{K}. 
%This graph is actually 3-word-representable \cite{KP}.
\item Let $n\ge 5$. Since the set  $A=\{ 1,\ldots, n\}$ is splittable by Lemma~\ref{lem2}, and each of its $n$ letters must be the first or the last letter of some permutation $P'_j$ for $j=1,\ldots,k$ by Lemma~\ref{lem3}, we have the inequality $2k\ge n$. Since $k$ is an integer, we obtain the bound  $k\ge \lceil n/2 \rceil$. 
\end{itemize} \end{proof}

\section{An upper bound for the representation number of $H_{n,n}$}\label{sec-upper}

In this section we provide a construction that shows that the bound in Theorem~\ref{thm1} is tight for all $n$ except $n=1,2,4$. We need the following auxiliary fact.

\begin{lemma}\label{lem4} If $n=2k\ge 6$ then for every partition of the set  $A=\{ 1,\ldots, 2k\}$ into $k$ pairs $(a_1,b_1),\ldots, (a_k,b_k)$ there exist permutations $P(a_1,b_1), \ldots,  P(a_k,b_k)$ such that:

\begin{enumerate}
\item $l(P(a_i,b_i))=a_i, r(P(a_i,b_i))=b_i$ for each $i=1,\ldots, k$, and
\item For every $x,y\in A$ there are $i,j$ such that $x<y$ in $P(a_i,b_i)$ and $y<x$ in $P(a_j,b_j)$.
\end{enumerate}
\end{lemma}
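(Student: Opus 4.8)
The plan is to give an explicit construction and then verify the two required properties directly. Write the pairs as $(a_1,b_1),\dots,(a_k,b_k)$, and for each $i$ define $P(a_i,b_i)$ to be the permutation that begins with $a_i$, ends with $b_i$, and whose middle consists of, first, the block of all letters $b_j$ with $j\ne i$ and then the block of all letters $a_j$ with $j\ne i$ (each block listed, say, in increasing order of the index $j$). Thus
\[
P(a_i,b_i)=a_i\,\bigl(b_j:\,j\ne i\bigr)\,\bigl(a_j:\,j\ne i\bigr)\,b_i .
\]
Property~1 holds by construction, so the whole argument reduces to checking property~2, namely that every unordered pair of letters occurs in both relative orders across these $k$ permutations.

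To verify property~2 I would split the unordered pairs $\{x,y\}$ into four types according to the roles of $x$ and $y$, and check each type separately. For a pair $\{a_i,a_j\}$ with $i\ne j$, the letter $a_i$ is first in $P(a_i,b_i)$ while $a_j$ is first in $P(a_j,b_j)$, so both orders appear; symmetrically, a pair $\{b_i,b_j\}$ is split because each $b_i$ is last in its own permutation. For a same-pair $\{a_i,b_i\}$ we have $a_i<b_i$ in $P(a_i,b_i)$, whereas in every other permutation $b_i$ lies in the leading $b$-block and $a_i$ in the trailing $a$-block, giving $b_i<a_i$. These three types need no hypothesis beyond $k\ge 2$.

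The crucial type is a cross-pair $\{a_i,b_j\}$ with $i\ne j$. Here both endpoint permutations force the \emph{same} order: $a_i<b_j$ in $P(a_i,b_i)$ because $a_i$ is first, and also $a_i<b_j$ in $P(a_j,b_j)$ because $b_j$ is last. The opposite order must therefore come from a third permutation $P(a_m,b_m)$ with $m\ne i,j$, and this is exactly where the block structure pays off: in such a permutation $b_j$ sits in the leading $b$-block and $a_i$ in the trailing $a$-block, so $b_j<a_i$. I expect this cross-pair case to be the main obstacle, and it is precisely the step that requires the existence of an index $m\notin\{i,j\}$, i.e.\ $k\ge 3$, equivalently $n=2k\ge 6$; this is what forces the exclusion of $n=4$. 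Since the four types exhaust all unordered pairs, property~2 follows. Finally I would remark that the internal orders chosen inside the two blocks are irrelevant to the argument (only the relation ``$b$-block precedes $a$-block'' is ever used), so any fixed choice of listing works.
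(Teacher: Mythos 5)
Your proof is correct, and your construction is genuinely different from the paper's. You use a single uniform template $P(a_i,b_i)=a_i\,(b_j: j\ne i)\,(a_j: j\ne i)\,b_i$ for every index, and your four-case verification is sound: $\{a_i,a_j\}$ and $\{b_i,b_j\}$ are split by the endpoint positions, $\{a_i,b_i\}$ by the $b$-block-before-$a$-block rule in any other permutation, and the cross-pairs $\{a_i,b_j\}$, $i\ne j$, by a third permutation $P(a_m,b_m)$ with $m\notin\{i,j\}$ --- correctly identifying that this last case is the one needing $k\ge 3$, which matches the paper's remark that the lemma fails for $n\in\{2,4\}$. The paper proceeds differently: it tunes only three permutations and leaves the rest almost arbitrary, setting $P(a_1,b_1)=a_1b_2Pa_2b_1$, $P(a_2,b_2)=a_2b_1Rev(P)a_1b_2$, $P(a_3,b_3)=a_3b_2a_1P'b_1a_2b_3$, and $P(a_i,b_i)=a_iP_ib_i$ for $i\ge 4$ with $P_i$ arbitrary, so that the bulk of the order reversals come from the pair $P$, $Rev(P)$, while the special letters $a_1,a_2,a_3,b_1,b_2,b_3$ are handled by explicit placement. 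What each approach buys: yours is symmetric in the indices, every permutation has the same shape, the case analysis is easy to audit, and the role of the hypothesis $n\ge 6$ is localized and transparent; the paper's shows there is substantial freedom in the construction (all interiors for $i\ge 4$ and the choice of $P$, $P'$ are unconstrained), at the cost of an asymmetric definition whose verification is left to the reader as ``straightforward.'' Your closing observation that only the relation ``$b$-block precedes $a$-block'' is used, so the internal block orders are irrelevant, is also correct.
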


\begin{proof} Let $P$ be an arbitrary permutation over the set $A\setminus \{a_1,a_2, b_1, b_2\}$, $Rev(P)$ be obtained from $P$ by writing it in the reverse order, $P'$ be an arbitrary permutation over the set $A\setminus \{a_1,a_2, a_3,b_1, b_2,b_3\}$ and for each $i=4,\ldots, k$ let $P_i$ be an arbitrary permutation over the set $A\setminus \{a_i, b_i\}$. Define the sought permutations as follows: $P(a_1,b_1)=a_1b_2Pa_2b_1,\ P(a_2,b_2)=a_2b_1Rev(P)a_1b_2,\  P(a_3,b_3)=a_3b_2a_1P'b_1a_2b_3$ and $P(a_i,b_i)=a_iP_ib_i$ for each $i=4,\ldots, k$. It is straightforward to verify that both requirements of the lemma hold for these permutations. \end{proof}

Note that for $n\in \{2,4\}$ Lemma \ref{lem4} is not true.

\begin{theorem}\label{thm2} If $n\ge 5$ then the crown graph  $H_{n,n}$ is $\lceil n/2 \rceil$-representable. \end{theorem}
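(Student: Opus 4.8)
The plan is to prove the matching upper bound by *explicit construction*, splitting on the parity of $n$.

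First consider the clean case $n = 2k \ge 6$ even, where $\lceil n/2\rceil = k$. Here I would use Lemma~\ref{lem4} directly. The idea is to build a word of the form $Q_1 Q_2 \cdots Q_k$, where each $Q_j$ is a permutation of the full vertex set $V = \{1,\ldots,n,1',\ldots,n'\}$ constructed so that the primed and unprimed letters interleave correctly. Concretely, I would fix a partition of $A = \{1,\ldots,n\}$ into $k$ pairs $(a_i,b_i)$, apply Lemma~\ref{lem4} to get permutations $P(a_i,b_i)$ of $A$, and then in the $i$-th block place the unprimed permutation $P(a_i,b_i)$ together with a permutation of the primed letters chosen so that the pair $\{i, i'\}$ is forced to be *non-alternating* (this is exactly why the endpoint structure $l = a_i$, $r = b_i$ from Lemma~\ref{lem4} matters: it controls which unprimed/primed pairs alternate). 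The second condition in Lemma~\ref{lem4} — that every pair $x,y$ appears in both orders across the blocks — is what guarantees that the non-edges *within* the unprimed side (and symmetrically the primed side) are realised, since two letters that swap order somewhere cannot alternate. The verification that $x$ and $y'$ alternate exactly when $x \ne y$ is the heart of the argument: edges $xy'$ with $x\ne y$ must alternate, while the missing matching edges $ii'$ must not.

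For $n$ odd, write $n = 2k-1$, so $\lceil n/2\rceil = k$ again. The natural move is to reduce to the even case: take the construction that $k$-represents $H_{2k,2k}$ and *delete* one pair of vertices, say $n+1$ and $(n+1)'$, removing every occurrence of these two letters from the word. Since $H_{n,n}$ is an induced subgraph of $H_{n+1,n+1}$ (removing a vertex from each side of a crown graph yields a smaller crown graph), the restricted word still $k$-represents $H_{n,n}$: deleting letters preserves alternation of the remaining pairs, so all edges and non-edges among surviving vertices are unaffected. One should check that deletion does not accidentally leave a letter occurring fewer than $k$ times — but since every surviving letter kept all $k$ of its occurrences, uniformity is preserved.

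The step I expect to be the main obstacle is the precise interleaving of primed and unprimed letters within each block so that all three edge-types come out right simultaneously: the edges $xy'$ ($x\ne y$) must alternate, the non-edges $ii'$ must fail to alternate, and the non-edges inside each part ($xy$ and $x'y'$) must fail to alternate. The non-edges inside the parts are handled by condition~2 of Lemma~\ref{lem4} applied to each side, and the matching non-edges $ii'$ are handled by the endpoint control of condition~1; the delicate part is confirming that these constraints do not conflict with the requirement that *every* cross pair $xy'$ with $x \ne y$ alternates. I would verify this by writing out the induced subword on $\{x, y'\}$ block by block and checking alternation holds iff $x\neq y$, leaning on the fact proved in Lemma~\ref{lem4} that the endpoints $a_i, b_i$ are exactly the letters whose position relative to the corresponding primed letter is pinned down.
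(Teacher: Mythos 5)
Your proposal follows essentially the same route as the paper: for even $n=2k$ you concatenate the Lemma~\ref{lem4} permutations of the unprimed and primed sides in alternating blocks (so every cross pair $xy'$ alternates, while condition~2 kills the intra-part pairs), you invoke the endpoint control of condition~1 to destroy exactly the matching pairs $ii'$, and you handle odd $n$ by deleting one vertex pair from the even-case word, exactly as the paper does. The only detail your sketch leaves implicit --- how the primed permutations are ``chosen'' so that each $ii'$ fails to alternate --- is realized in the paper by staggering the pairings (pairs $(1,2),(3,4),\ldots$ on the unprimed side against $(2',3'),\ldots,(n',1')$ on the primed side), cyclically shifting the word by one position, and transposing each resulting adjacent factor $ii'$, a local swap that breaks precisely those alternations without affecting any other pair.
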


\begin{proof} It is sufficient to prove the theorem only for $n=2k$, $k\geq 3$, because the case of $n=2k-1$ is obtained from the case of $n=2k$ by removing all occurrences of the letters $2k$ and $(2k)'$ from the respective word. First, consider the following $k$-uniform word, where the permutations $P(x,y)$s are defined in Lemma~\ref{lem4} and $P(x',y')$s are obtained from these by adding primes. 
$$w'=P(1,2)P(2',3')P(3,4)P(4',5')\cdots P(n-1,n)P(n',1').$$ It follows from the property 2 in Lemma~\ref{lem4}  that  $w'$ represents the complete bipartite graph $K_{n,n}$. Shift $w'$ cyclicly one position to the left to obtain the word $w''$ where for every even $i$ there is exactly one occurrence of the factor $ii'$ and for every odd $i$ there is exactly one occurrence of the factor $i'i$. Let $w$ be the word obtained from $w''$ by switching $i$ and $i'$ in each of these factors. This operation makes the subword induced by $i$ and $i'$ non-alternating (thus removing the edges $ii'$ in $K_{n,n}$) but does not affect any other alternations in the word. Therefore, $w$ $k$-represents $H_{n,n}$, as desired.\end{proof}

Note that for $n<4$ the graph $H_{n,n}$ is 2-word-representable, which is given by the words $w_1=11'1'1$, $w_2= 12'21'21'12'$ and $w_3=12'3'123'1'231'2'3$, respectively (see pages 172 and 173 in \cite{KL}). As for $n=4$, note that $H_{4,4}$ is the three-dimensional cube, which is the prism graph Pr$_4$. Thus,  $H_{4,4}$ is 3-word-representable by Proposition 15 in \cite{KP} and it is not 2-word-representable by Theorem 18 in \cite{K}. An example of 3-representation of $H_{4,4}$ given on page 90 in \cite{KL} is
$$414'343'231'12'24'1'3'44'2'33'11'22'.$$

\section{Concluding remarks}\label{conclusion}

In this paper we found the representation number of any crown graph solving at once two open problems in \cite{KL}. We suspect that crown graphs are (among) the hardest graphs to be represented in the class of bipartite graphs in the sense that they require longest words for representation. It would be interesting to prove or disprove this fact. In either case, one should be able to apply the methods used in this paper, in particular, the notion of a splittable set, to provide a complete classification for the representation number of any bipartite graph.

\end{document}